\def\timestamp{%
Time-stamp: <library.tex: maandag 18-08-2025 at 17:31:01 (cest)>}
\def\stripname Time-stamp: <#1: #2 #3 at #4 #5>{#3/#4 (#1)}
\edef\filedate{\expandafter\stripname\timestamp}
\newcommand\orpr[2]{\langle{#1},{#2}\rangle}
\newcommand\norm[1]{\mathopen\|#1\mathclose\|}
\newcommand\abs[1]{\mathopen|#1\mathclose|}
\newcommand\preim{^\gets}
\DeclareMathSymbol\N0{AMSb}{`N}
\DeclareMathSymbol\Irr0{AMSb}{`P}
\DeclareMathSymbol\Q0{AMSb}{`Q}
\DeclareMathSymbol\R0{AMSb}{`R}
\DeclareMathSymbol\restr\mathbin{AMSa}{"16}
\DeclareMathSymbol\le    \mathrel{AMSa}{"36}
\DeclareMathSymbol\ge    \mathrel{AMSa}{"3E}
\newcommand\Open{\operatorname{O}}
\newcommand\Coz{\operatorname{Coz}}
\theoremstyle{plain}
\newtheorem{lemma}{Lemma}[section]
\newtheorem{theorem}[lemma]{Theorem}
\newtheorem{proposition}[lemma]{Proposition}
\theoremstyle{remark}
\newtheorem{remark}[lemma]{Remark}
\begin{document}

\title{An infinite library}
\author[K. P. Hart]{Klaas Pieter Hart}
\address{Faculty EEMCS\\TU Delft\\
         Postbus 5031\\2600~GA {} Delft\\the Netherlands}
\email{k.p.hart@tudelft.nl}
\urladdr{https://fa.ewi.tudelft.nl/\~{}hart}

\begin{abstract}
The purpose of this note is to describe a space that is regular but not 
completely regular, but only barely so: all closed sets are $G_\delta$-sets
and every singleton is a zero-set.  
\end{abstract}

\date\filedate

\maketitle

\section*{Introduction}

There are many examples of regular spaces that are not completely regular.
A particularly simple one is due to Adam Mysior, see~\cite{MR0601748}
or \cite{MR1039321}*{Example~1.5.9}.

What many of the examples have in common is that in order to ensure 
their non-complete regularity most continuous real-valued functions are
constant on relatively large sets.
This is often ensured by having very few points of first-countability
and very few closed sets that are $G_\delta$-sets.

In this note we adapt a method due to F.~B. Jones from~\cite{MR0413044}
to produce an example that is first-countable and in which every closed
set is a $G_\delta$-set and every singleton set is a zero-set.
This method goes back to Tychonoff's paper~\cite{MR1512595} where
it is applied to the top and right-hand lines of the Tychonoff plank.

We apply it to the sets $Q$ and~$P$ of the rational and irrational points
of the $x$-axis in the Niemytzki plane respectively.

The note is organized as follows.
Section~\ref{sec.Niemytzki} contains a description of the Niemytzki plane~$N$
and an analysis of the behaviour of continuous real-valued functions on the
$x$-axis.
In Section~\ref{sec.book} we apply Jones' method to the Niemytzki plane
and obtain an analogue of Tychonoff's example, but with in which every
closed set is a $G_\delta$-set and in which all points, but one, are zero-sets;
in Section~\ref{sec.library} we show how to make that one point a zero-set 
as well.

\section{The Niemytzki plane and continuous functions}
\label{sec.Niemytzki}

Let $N$ denote the Niemytzki plane,
see~\cite{MR0345087}*{\S\,1, Nr.~6, $2^\circ$}, 
or \cite{MR1039321}*{Example~1.2.4},
or~\cite{MR507446}*{Example~82}.

The underlying set of~$N$ is the closed upper half-plane in~$\R^2$, that is,
$N=\{\orpr xy: y\ge0\}$.
The topology is defined by specifying local bases at all points of~$N$.

If $y>0$ then the $n$th basic neighbourhood of $\orpr xy$ is
$$
U(x,y,n)=\bigl\{\orpr uv\in N:\norm{\orpr uv-\orpr xy}<2^{-n}\bigr\}
$$
and $\{U(x,y,n)n\in\N\}$ is the local base at~$\orpr xy$.

If $y=0$ then we denote, for $n\in\N$, by $U'(x,n)$ the open disc
$$
\bigl\{\orpr uv: \norm{\orpr uv-\orpr x{2^{-n}}} <2^{-n}\bigr\}
$$
and the $n$th basic neighbourhood of~$\orpr x0$
then is $U(x,n)=U'(x,n)\cup\{\orpr x0\}$.

\smallskip
In~$N$ we have two nice disjoint closed sets that cannot be separated by
disjoint open sets nor, a fortiori, by continuous real-valued functions.
These sets are
\begin{itemize}
\item $P=\{\orpr x0:x$ is irrational$\}$, and
\item $Q=\{\orpr x0:x$ is rational$\}$.
\end{itemize}
In~\cite{MR0345087}*{\S\,5, Nr.~5} this is part of an exercise with a hint
in a footnote, which comes down to: ``Use the Baire Category theorem''.

\smallbreak
We need some quantitative information about how close these two sets are
in the context of continuous real-valued functions.
To this end we formulate a few lemmas.
The first two are quite elementary, but useful.

\begin{lemma}\label{lemma:seq}
Let $a\in\R$ and let $\langle a_k:k\in\N\}$ be a sequence in~$\R$ that 
converges to~$a$.
Then for every~$n$ we have $U'(a,n)\subseteq\bigcup_{k\in\N}U'(a_k,n)$.
\end{lemma}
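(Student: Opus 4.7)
The plan is to unwind the definitions and reduce to a one-line continuity argument. Recall that $U'(a,n)$ is the open Euclidean disc of radius $2^{-n}$ centred at the point $\orpr a{2^{-n}}$, and similarly $U'(a_k,n)$ is the open disc of radius $2^{-n}$ centred at $\orpr{a_k}{2^{-n}}$. So an inclusion into a union is just a statement about covering points of one disc by a sequence of nearby discs of the same radius.

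First I would fix an arbitrary $\orpr uv\in U'(a,n)$, so that $\norm{\orpr uv-\orpr a{2^{-n}}}<2^{-n}$. To find $k$ with $\orpr uv\in U'(a_k,n)$ it suffices to show that $\norm{\orpr uv-\orpr{a_k}{2^{-n}}}<2^{-n}$ for some (in fact, all large enough)~$k$.

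Next, I would observe that the real-valued function
\[
\varphi(x)=\norm{\orpr uv-\orpr x{2^{-n}}}=\sqrt{(u-x)^2+(v-2^{-n})^2}
\]
is continuous in~$x$. Since $a_k\to a$ we get $\varphi(a_k)\to\varphi(a)<2^{-n}$, so there exists $K$ such that $\varphi(a_k)<2^{-n}$ for all $k\ge K$; in particular $\orpr uv\in U'(a_k,n)$ for such~$k$. As $\orpr uv$ was arbitrary this proves $U'(a,n)\subseteq\bigcup_{k}U'(a_k,n)$.

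There is no real obstacle: the lemma is a direct consequence of the continuity of Euclidean distance and the convergence $a_k\to a$. The only thing to be mildly careful about is the strict inequality defining the open discs, which is what allows the eventual inequality $\varphi(a_k)<2^{-n}$ to follow from the strict bound $\varphi(a)<2^{-n}$.
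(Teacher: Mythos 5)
Your proof is correct and takes essentially the same approach as the paper: fix a point of $U'(a,n)$, note that its distance to the centre is strictly less than $2^{-n}$, and use the convergence $a_k\to a$ to find a $k$ for which the point lies in $U'(a_k,n)$. The paper makes the continuity step explicit via the triangle inequality (choosing $k$ with $\abs{a_k-a}<2^{-n}-\norm{\orpr uv-\orpr a{2^{-n}}}$), but this is the same argument.
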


\begin{proof}
Let $\orpr xy\in U'(a,n)$, then $\norm{\orpr xy-\orpr a{2^{-n}}}<2^{-n}$.  
Take $k\in\N$ such that 
$$
\abs{a_k-a}<2^{-n}-\norm{\orpr xy-\orpr a{2^{-n}}}
$$
Then $\norm{\orpr xy-\orpr{a_k}{2^{-n}}}<2^{-n}$, and so $\orpr xy\in U'(a_k,n)$.
\end{proof}

From this we deduce the following lemma.

\begin{lemma}\label{lemma:interval}
Let $I=[a,b]$ be a closed interval in~$\R$, let $D$ be dense in~$I$, 
and let~$n\in\N$.
Then $\bigcup_{d\in D}U'(d,n)=\bigcup_{x\in I}U'(x,n)$.  
\end{lemma}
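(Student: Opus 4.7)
The plan is to prove the two inclusions separately, with the nontrivial direction reduced directly to Lemma~\ref{lemma:seq}.

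First I would handle the inclusion $\bigcup_{d\in D}U'(d,n)\subseteq\bigcup_{x\in I}U'(x,n)$, which is immediate because $D\subseteq I$: every term on the left appears on the right.

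For the reverse inclusion, I would pick an arbitrary $x\in I$ and show that $U'(x,n)\subseteq\bigcup_{d\in D}U'(d,n)$. Because $D$ is dense in~$I$ and $x\in I=[a,b]$, there exists a sequence $\langle d_k:k\in\N\rangle$ in~$D$ with $d_k\to x$ in~$\R$. Here $I$ being closed in~$\R$ means that density in~$I$ really does give a sequence in~$D$ converging to any point of~$I$, including the endpoints, which is the only delicate bookkeeping. Lemma~\ref{lemma:seq} applied to this sequence yields $U'(x,n)\subseteq\bigcup_{k\in\N}U'(d_k,n)$, and since each $d_k\in D$ the latter union is contained in $\bigcup_{d\in D}U'(d,n)$. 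Taking the union over $x\in I$ completes the argument.

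There is no real obstacle: the statement is essentially a direct corollary of Lemma~\ref{lemma:seq}, the only point worth stating carefully being that ``dense in~$I$'' suffices to approximate endpoints by sequences from~$D$.
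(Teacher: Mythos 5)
Your proof is correct and follows exactly the paper's own route: the easy inclusion from $D\subseteq I$, and for the reverse, a sequence in $D$ converging to $x\in I$ combined with Lemma~\ref{lemma:seq}. Your extra remark about endpoints is fine but not needed beyond what the paper already does.
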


\begin{proof}
One inclusion is clear.

For the other let $x\in I$ and let $\langle d_k:k\in\N\rangle$ be a sequence
in~$D$ that converges to~$x$. 
Then by Lemma~\ref{lemma:seq} we have 
$U'(x,n)\subseteq\bigcup_kU'(d_k,n)\subseteq\bigcup_{d\in D}U'(d,n)$.   
\end{proof}

We apply Lemma~\ref{lemma:interval} and the Baire Category Theorem to 
establish the following two lemmas.

\begin{lemma}\label{lemma.PtoQ}
If $f:N\to[0,1]$ is continuous and $G$~is a dense $G_\delta$-subset of\/~$\R$
such that $f\restr G\equiv0$ then for every open
interval~$I$ in~$\R$ we have $\inf\{f(q,0):q\in I\cap\Q\}=0$.
\end{lemma}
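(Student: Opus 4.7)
The plan is to argue by contradiction, using the Baire Category Theorem on the dense $G_\delta$-set~$G$. Suppose some open interval~$I$ admits a $\delta>0$ with $f(q,0)\ge\delta$ for every $q\in I\cap\Q$. Because $f(x,0)=0$ for every $x\in G$, continuity of~$f$ at $\orpr x0$ yields an $n\in\N$ with $f<\delta/2$ on all of $U(x,n)$. Accordingly I would set
$$
A_n=\bigl\{x\in G:f(y)<\delta/2\text{ for every }y\in U'(x,n)\bigr\},
$$
so that $G=\bigcup_n A_n$.

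Next, since $G\cap I$ is a dense $G_\delta$ in~$I$, it is non-meager there, and therefore at least one $A_n$ fails to be nowhere dense in~$I$. I would then pick a non-degenerate closed subinterval $J=[c,d]\subseteq I$ on which this $A_n$ is dense, and apply Lemma~\ref{lemma:interval} with $D=A_n\cap J$ to obtain
$$
\bigcup_{x\in J}U'(x,n)=\bigcup_{t\in A_n\cap J}U'(t,n).
$$
Because $f<\delta/2$ on each $U'(t,n)$ with $t\in A_n$, the inequality $f<\delta/2$ then holds on the entire left-hand side.

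To finish, I would take any rational $q$ in the interior of~$J$. For each $k\ge n$ the point $\orpr q{2^{-k}}$ belongs to $U'(q,n)\subseteq\bigcup_{x\in J}U'(x,n)$, so $f(q,2^{-k})<\delta/2$; letting $k\to\infty$ and using continuity at $\orpr q0$ yields $f(q,0)\le\delta/2<\delta$, contradicting the choice of~$\delta$ since $q\in I\cap\Q$. The hard part will be the Baire step: one must turn the abstract non-meagerness of~$G\cap I$ into the concrete fact that some $A_n$ is dense in an \emph{entire} subinterval, because Lemma~\ref{lemma:interval} is precisely what allows that density to be promoted from the scattered points of~$A_n$ to a uniform bound on the vertical strip above~$J$.
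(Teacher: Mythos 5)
Your argument is correct and is essentially the paper's own proof: both decompose the dense $G_\delta$-set according to the least $n$ witnessing continuity at $\orpr x0$, invoke the Baire Category Theorem to find a subinterval $J$ in which one piece $A_n$ is dense, and then use Lemma~\ref{lemma:interval} to cover $\bigcup_{q\in\Q\cap J}U'(q,n)$ by the sets $U'(t,n)$ with $t\in A_n\cap J$, forcing $f(q,0)\le\delta/2$ by continuity. The only cosmetic difference is that you phrase it as a contradiction with a fixed $\delta$, while the paper argues directly that for every $\varepsilon>0$ some rational $q\in I$ has $f(q,0)<\varepsilon$.
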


\begin{proof}
It suffices to show that for every nonempty open interval~$I$ and 
every $\varepsilon>0$ there is a $q\in\Q$ such that $f(q,0)<\varepsilon$.

Let $I$ and $\varepsilon$ be given.
For every $p\in G$ we let $n_p$ be the smallest natural number 
for which $f[U(p,n_p)]\subseteq[0,\frac12\varepsilon)$.

By the Baire Category Theorem there are a nonempty open subinterval~$J$ 
of~$I$ and a natural number~$n$ such that $J$~is a subset of the closure 
of $A=\{p:n_p=n\}$.
Now both $A\cap J$ and $\Q\cap J$ are dense in~$J$, 
and so by Lemma~\ref{lemma:interval} we have
$$
\bigcup_{p\in A\cap J} U'(p,n) = \bigcup_{q\in\Q\cap J} U'(q,n)
$$
This shows that $f[U'(q,n)]\subseteq[0,\frac12\varepsilon)$ for 
all~$q\in\Q\cap J$.
But then for all $q\in\Q\cap J$ we have
$f(q,0)\le\frac12\varepsilon<\varepsilon$.
\end{proof}

We also have the converse of this lemma.

\begin{lemma}\label{lemma.QtoP}
If $f:N\to[0,1]$ is continuous and such that for every nonempty
open interval~$I$ in~$\R$ we have $\inf\{f(q,0):q\in I\cap\Q\}=0$ then the set
$\{x\in\R:f(x,0)=0\}$ contains a dense $G_\delta$-set of~$\R$.
\end{lemma}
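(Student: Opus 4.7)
The plan mirrors the proof of Lemma~\ref{lemma.PtoQ}. For each $n\in\N$ I will show that the set $E_n=\{x\in\R:f(x,0)<2^{-n}\}$ has dense interior in~$\R$; applying the Baire Category Theorem to~$\R$ then makes $G=\bigcap_n\operatorname{int}E_n$ a dense $G_\delta$-set, and $G\subseteq\bigcap_nE_n=\{x:f(x,0)=0\}$, which is exactly the desired conclusion.

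Fix $n\in\N$ and an arbitrary nonempty open interval $I\subseteq\R$. Applied to every open subinterval of~$I$, the hypothesis says that $D=\{q\in I\cap\Q:f(q,0)<2^{-n-1}\}$ is dense in~$I$. For each $q\in D$, continuity of~$f$ at~$\orpr q0$ yields a smallest $m_q\in\N$ with $f[U(q,m_q)]\subseteq[0,2^{-n-1})$; write $D=\bigcup_{m}A_m$ where $A_m=\{q\in D:m_q\le m\}$. Any nonempty open subset of~$I$ disjoint from every $\overline{A_m}^I$ would be disjoint from~$D$ itself, contradicting density; hence $I=\bigcup_m\overline{A_m}^I$, and the Baire Category Theorem delivers an~$m$ together with a nonempty open subinterval $J\subseteq I$ in which $A_m$ is dense. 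Lemma~\ref{lemma:interval} then gives
\[
\bigcup_{q\in A_m\cap J}U'(q,m)=\bigcup_{x\in J}U'(x,m),
\]
and since $m_q\le m$ implies $U'(q,m)\subseteq U(q,m_q)$, the function~$f$ takes values in $[0,2^{-n-1})$ throughout the left-hand union. For every $x\in J$ the disc $U'(x,m)$ is therefore mapped into $[0,2^{-n-1})$; approaching $\orpr x0$ along a sequence in~$U'(x,m)$ (for instance $\orpr x{2^{-k}}$ for large~$k$) and using continuity forces $f(x,0)\le 2^{-n-1}<2^{-n}$. Thus $J\subseteq E_n$, so $\operatorname{int}E_n$ meets every nonempty open interval of~$\R$ and is dense.

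The main, essentially minor, technical point to navigate is that the $x$-axis inherits the discrete topology from~$N$, so smallness of~$f$ on the axis is not an \emph{a priori} open condition in~$\R$ and must be propagated through tangent discs via Lemma~\ref{lemma:interval}. The Baire step itself rests on the identity $I=\bigcup_m\overline{A_m}^I$, which is strictly stronger than the trivial $I=\overline{\bigcup_m A_m}^I$ and is precisely the place where the density hypothesis is exploited.
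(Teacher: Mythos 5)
There is a genuine gap at the Baire step. You decompose $D=\bigcup_m A_m$ and claim that $I=\bigcup_m\overline{A_m}^I$ because no nonempty open subset of~$I$ can be disjoint from every $\overline{A_m}^I$. That argument only shows that $\bigcup_m\overline{A_m}^I$ is \emph{dense} in~$I$, not that it equals~$I$, and density of a countable union of closed sets does not let Baire produce one with nonempty interior: $\Q\cap I=\bigcup_{q\in\Q\cap I}\{q\}$ is a dense union of closed nowhere dense sets. Since $D$ is countable, hence meager, the covering $D=\bigcup_m A_m$ supplies no non-meager set to which the Baire Category Theorem applies. Worse, the equality $I=\bigcup_m\overline{A_m}^I$ can genuinely fail: if $f(x,0)>2^{-n-1}$ then $f$ exceeds $2^{-n-1}$ on some disc $U'(x,k)$, and since two tangent discs $U'(q,r)$ and $U'(x,k)$ meet as soon as $q$ is close enough to~$x$ (relative to the radii), every $q\in D$ sufficiently near~$x$ must have $m_q$ large; so $x$ lies in no $\overline{A_m}^I$. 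Ruling out a dense set of such points is essentially what the lemma asserts, so it cannot be presupposed.

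The paper's proof repairs exactly this defect: it classifies \emph{every} point $p\in I$, not just the rationals where $f$ is already small, by the pair consisting of the approximate value~$i_p$ of $f(p,0)$ in a finite cover of $[0,1]$ by short overlapping intervals~$K_i$ and the modulus~$n_p$ with $f[U(p,n_p)]\subseteq K_{i_p}$. This is a countable decomposition of all of~$I$, so Baire legitimately yields a class $A=\{p:i_p=i,\ n_p=n\}$ dense in some subinterval~$J$; only afterwards is the hypothesis $\inf\{f(q,0):q\in\Q\cap J\}=0$ invoked, via Lemma~\ref{lemma:interval}, to force the generic value class~$K_i$ to be the one adjacent to~$0$. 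Your remaining steps are sound: the reduction to showing each $E_n$ has dense interior, the inclusion $U'(q,m)\subseteq U(q,m_q)$ for $m\ge m_q$, and the propagation of smallness from a dense subset of~$J$ through the tangent discs to all of~$J$ via Lemma~\ref{lemma:interval}. What your argument does not establish is precisely the existence of the interval~$J$ on which some $A_m$ is dense.
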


\begin{proof}
We show that for every~$k\in\N$ and every nonempty open interval~$I$ 
there is nonempty open subinterval~$J$ of~$I$ such that $f(x,0)<2^{-k}$ 
for all~$x\in J$.

This then implies that there is a sequence $\langle O_k:k\in\N\rangle$
of dense open sets in~$\R$ such that $f(x,0)<2^{-k}$ whenever
$x\in O_k$.
The intersection~$\bigcap_{k\in\N}O_k$ is a dense $G_\delta$-set and
contained in $\{x:f(x,0)=0\}$.

\smallskip
Let $k$ and $I$ be given.
Let $\varepsilon=\frac13\cdot2^{-k}$ and cover $[0,1]$ by the
intervals $K_i=\bigl(i\cdot\varepsilon,(i+2)\cdot\varepsilon\bigr)$, 
where $i=-1$, $0$, \dots, $3\cdot2^k-1$.

For $p\in I$ let $i_p$ be minimal with $f(p,0)\in K_{i_p}$ and let $n_p$ 
be minimal such that $f[U(p,n_p)]\subseteq K_{i_p}$.
By the Baire Category Theorem we have $i$ and $n$ and 
a nonempty open subinterval~$J$
of~$I$ such that $A=\{p:i_p=i$ and $n_p=n\}$ contains the closure of~$J$.

Again by Lemma~\ref{lemma:interval} we have 
$$
\bigcup_{p\in A\cap J} U'(p,n) = \bigcup_{q\in\Q\cap J} U'(q,n)
$$
This now implies that $f[U'(q,n)]\subseteq K_i$, and hence that
$i\cdot\varepsilon\le f(q,0)$, whenever $q\in\Q\cap J$.
But $\inf\{f(q,0):q\in\Q\cap J\}=0$ and hence $i=-1$ or $i=0$.

We find that $f[U(x,n)]\subseteq[0,2\varepsilon]$, and hence
$f(x,0)\le2\varepsilon<2^{-k}$, for all $x\in J$.
\end{proof}

\section{A Book}
\label{sec.book}

We apply Jones' method from~\cite{MR0413044} to~$N$ to obtain a 
regular, non-completely regular space, as follows.

We start with the product $N\times\N$, where $\N$ carries its discrete
topology.

For every even $n\in\N$ and $q\in\Q$ we identify the points 
$\orpr{\orpr q0}{n}$ and $\orpr{\orpr q0}{n+1}$, so that $Q\times\{n\}$
and~$Q\times\{n+1\}$ become one copy of~$Q$, which we denote~$Q_n$.

Likewise for every odd $n\in\N$ and $p\in\Irr$ we identify
$\orpr{\orpr p0}{n}$ and $\orpr{\orpr p0}{n+1}$, thus creating out of
$P\times\{n\}$ and~$P\times\{n+1\}$ one copy of~$P$ that we denote~$P_n$.

The resulting space we call $B$ and we let $\pi:N\times\N\to B$ denote 
the quotient map.

\begin{remark}
In \cite{MR507446}*{Examples 90 and~91} Tychonoff's example, mentioned
in the introduction, is represented pictorially as a corkscrew.

Our space $B$ looks more like a book made from infinitely many sheets
of paper sewn together alternately along the rational and irrational 
points on the $x$-axis, hence the title of the present section.
We shall call $B$ a \emph{book} from now on. 
\end{remark}

Lemmas~\ref{lemma.PtoQ} and~\ref{lemma.QtoP} above imply that 
if $f:B\to[0,1]$ is continuous and equal to~$0$
on the set~$Q_0$ then, by induction, the following holds for every~$n$,
where we let $F:N\times\N\to[0,1]$ be the composition $f\circ\pi$.
\begin{itemize}
\item if $n$ is even then $\inf\{F(x,0,n):x\in O\cap\Q\}=0$ 
      whenever $O$ is an open interval in~$\R$, and
\item if $n$ is odd then there is a dense $G_\delta$-set $G_n$, such that
      $F(x,0,n+1)=F(x,0,n)=0$ whenever $x\in G_n\cap P_n$.
\end{itemize}

\begin{remark}\label{rem:Gdelta}
Note that by the second item there is in fact a single dense $G_\delta$-set~$G$,
to wit~$\bigcap\{G_n:n$~is odd$\}$, such that 
$F(x,0,n+1)=F(x,0,n)=0$ whenever $x\in G\cap P_n$ and~$n$~is odd.
\end{remark}

This implies that if we were to add a `point at infinity' $\infty$
to $N\times\N$ with basic neighbourhoods 
$U_m=\{\infty\}\cup(N\times\{n\in\N:n\ge m\})$
and apply the quotient operation above to the new space
$(N\times\N)\cup\{\infty\}$,
then the resulting quotient space $B\cup\{\infty\}$ is not completely 
regular at~$\infty$.

\begin{lemma}\label{lemma:piclosed}
The map $\pi:(N\times\N)\cup\{\infty\}\to B\cup\{\infty\}$ is closed.  
\end{lemma}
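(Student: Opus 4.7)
The plan is to prove $\pi$ is closed by taking a closed set $C\subseteq(N\times\N)\cup\{\infty\}$ and showing that its saturation $C^{\ast}=\pi\preim(\pi(C))$ is closed. Each nontrivial equivalence class of $\pi$ has exactly two elements, sitting at the same point of the $x$-axis on two adjacent sheets; thus $C^{\ast}$ is $C$ together with the \emph{partner}, on the relevant adjacent sheet, of every point of $C$ that is paired. Given $z\notin C^{\ast}$ I will exhibit an open neighbourhood of $z$ disjoint from $C^{\ast}$, by case analysis on the location of~$z$.

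If $z=\infty$, closedness of $C$ yields $m$ with $U_m\cap C=\emptyset$; any point of $U_{m+1}$ sits on a sheet of index $\ge m+1$, and its partner, if present, lies on an adjacent sheet of index $\ge m$ and hence inside $U_m$, so $U_{m+1}$ misses $C^{\ast}$. If $z=\orpr{\orpr ab}n$ with $b>0$ then $z$ has no partner, and I can pick $k$ so that $U(a,b,k)\times\{n\}$ is disjoint from $C$ and, enlarging $k$ if necessary, so that $2^{-k}<b$. Then $U(a,b,k)$ misses the $x$-axis, contains no point with a partner, and therefore avoids $C^{\ast}$.

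The remaining case is $z=\orpr{\orpr a0}n$. Let $z'=\orpr{\orpr a0}{n'}$ be the unique partner of $z$ if it exists; otherwise set $z'=z$ and $n'=n$. Since $z\notin C^{\ast}$, neither $z$ nor $z'$ is in $C$, so closedness yields $k$ and $k'$ with $U(a,k)\times\{n\}$ and $U(a,k')\times\{n'\}$ disjoint from $C$. Put $k''=\max(k,k')$ and let $V=\bigl(U(a,k'')\times\{n\}\bigr)\cup\bigl(U(a,k'')\times\{n'\}\bigr)$. The only substantive point in the whole argument is the geometric observation that $U(a,k'')$ meets the $x$-axis only at $\orpr a0$, because $U'(a,k'')$ is an open disc tangent to the $x$-axis from above at that very point. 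Consequently the only elements of $V$ carrying a partner are $z$ and $z'$, whose partners are each other and lie in $V$; as $V$ is disjoint from $C$, it is disjoint from $C^{\ast}$. The main obstacle is exactly this geometric containment, which prevents the saturation from spilling out of the chosen neighbourhood; everything else amounts to bookkeeping across the three cases.
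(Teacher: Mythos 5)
Your proof is correct and takes essentially the same route as the paper: both arguments reduce closedness of $\pi$ to showing that the saturation $\pi\preim\bigl[\pi[C]\bigr]$ of a closed set is closed, and both rest on the facts that partners live only on adjacent sheets (handling $\infty$) and that $U'(a,k)$ is tangent to the $x$-axis, so the added partner-points form a (closed, discrete) subset of the axis. The paper phrases this as a decomposition of the saturation into closed pieces on each sheet, while you verify openness of the complement pointwise; the content is the same.
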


\begin{proof}
If $F$~is closed in $(N\times\N)\cup\{\infty\}$ then 
$\pi\preim\bigl[\pi[F]\bigr]$~is closed as well.
If $n\in\N$ then its intersection with the sheet $N\times\{n\}$
consists of three parts (two if $n=0$): 
\begin{itemize}
\item $F\cap(N\times\{n\})$, 
\item the set $\{\orpr{\orpr x0}{n}: \orpr{\orpr x0}{n+1}\in F\}$, and
\item the set $\{\orpr{\orpr x0}{n}: \orpr{\orpr x0}{n-1}\in F\}$
if also $n>0$.
\end{itemize}
The union of these intersections is closed in $N\times\N$.

If $\infty\notin F$ then $F$~intersects only finitely many sheets and
so $\infty$~is not in the closure of $\pi\preim\bigl[\pi[F]\bigr]$.    

If $\infty\in F$ then all is well.
\end{proof}

\begin{lemma}[Exercise 1.5.H in \cite{MR1039321}]\label{lemma:Nperfect}
In the Niemytzki plane every closed set is a $G_\delta$-set.  
\end{lemma}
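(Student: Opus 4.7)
The plan is to decompose an arbitrary closed set $F\subseteq N$ along the $x$-axis $L=\{\orpr x0:x\in\R\}$ as $F=(F\cap L)\cup(F\setminus L)$ and show that each piece is a $G_\delta$-set; since a finite union of $G_\delta$-sets is again a $G_\delta$-set, this will give the result.

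For the piece $F\setminus L$ the argument is soft: the open upper half-plane $H=N\setminus L$ carries its Euclidean topology and is in particular metrizable, hence perfectly normal. The set $F\cap H$ is closed in the subspace~$H$, so it can be written as~$\bigcap_n W_n$ with each~$W_n$ open in~$H$; because $H$ is open in~$N$, each~$W_n$ is open in~$N$, and $F\setminus L$ is a $G_\delta$-set of~$N$.

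The work is in showing that $F\cap L$ is a $G_\delta$-set of~$N$, and it is natural to prove the stronger statement that \emph{every} subset $A\subseteq L$ is a $G_\delta$-set of~$N$. For $n\in\N$ put $V_n=\bigcup\{U(x,n):\orpr x0\in A\}$; this is open in~$N$. The key geometric observation is that the tangent disc $U'(x,n)$ lies in the horizontal strip $0<y<2^{-n+1}$ and touches~$L$ only in $\orpr x0$, so $V_n\cap L=A$ and $V_n\subseteq L\cup\{\orpr uv:0<v<2^{-n+1}\}$. Hence any $\orpr x0\notin A$ avoids every $V_n$, and any $\orpr uv$ with $v>0$ avoids $V_n$ once $2^{-n+1}\le v$; this shows $A=\bigcap_n V_n$. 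Applying this to $A=F\cap L$ completes the proof.

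The plan involves no real obstacle: the only thing to verify carefully is the geometric claim about the vertical extent of the discs $U'(x,n)$, which is immediate from their definition as discs of radius~$2^{-n}$ centred at~$\orpr x{2^{-n}}$.
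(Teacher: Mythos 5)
Your proposal is correct and follows essentially the same route as the paper: split $F$ into its part on the $x$-axis and the rest, handle the latter via the Euclidean (hence perfectly normal) open upper half-plane, and write the former as $\bigcap_n\bigcup\{U(x,n):\orpr x0\in F\}$. You merely spell out the tangency argument that the paper leaves implicit, and note explicitly that it shows every subset of the $x$-axis is a $G_\delta$-set.
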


\begin{proof}[Solution]
Let $F$ be closed in~$N$ and divide it onto $F_1$, its intersection with the
$x$-axis, and $F_2=F\setminus F_1$.

The set $F_2$ is closed in the open subspace $\{\orpr xy:y>0\}$, which carries
its Euclidean topology.
Hence $F_2$~is a $G_\delta$-set in that subspace and hence in~$N$.

Let $A=\{x:\orpr x0\in F\}$; then 
$F_1=\bigcap_{k\in\N}\bigcup_{x\in A}U(x,k)$.

So $F$~is the union of two $G_\delta$-sets.
\end{proof}

\begin{proposition}\label{prop:Bperfect}
In the space $B\cup\{\infty\}$ every open set is an $F_\sigma$-set.  
\end{proposition}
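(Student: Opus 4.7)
The statement is equivalent to saying every closed set in $B\cup\{\infty\}$ is a $G_\delta$, so I would aim for that formulation. The strategy is: first show that the upstairs space $(N\times\N)\cup\{\infty\}$ is perfect (every closed set is a $G_\delta$), and then transfer this property down to $B\cup\{\infty\}$ using the closed quotient map $\pi$ from Lemma~\ref{lemma:piclosed}.

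For the upstairs statement, Lemma~\ref{lemma:Nperfect} handles each sheet $N\times\{n\}$, which is clopen in $N\times\N$. Given a closed set $E$ in $(N\times\N)\cup\{\infty\}$, I would split on whether $\infty\in E$. If $\infty\notin E$, then $E$ is disjoint from some tail neighbourhood $U_m$, hence $E$ is contained in the finite clopen union $N\times\{0,\dots,m-1\}$, where it is clearly $G_\delta$. If $\infty\in E$, write $E_n=E\cap(N\times\{n\})$ and, using Lemma~\ref{lemma:Nperfect}, choose a decreasing sequence of open sets $V_{n,k}$ in $N\times\{n\}$ with $\bigcap_k V_{n,k}=E_n$. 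Then setting
$$
W_k=U_k\cup\bigcup_{n<k} V_{n,k}
$$
yields open sets in the upstairs space whose intersection is exactly $E$: the point $\infty$ lies in every $U_k$, and for $(x,m)\in N\times\{m\}$ membership in $W_k$ for all $k>m$ forces $(x,m)\in\bigcap_{k>m}V_{m,k}=E_m$.

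To pass from upstairs to downstairs I would use the standard trick for closed maps. Let $F$ be closed in $B\cup\{\infty\}$, so $\pi\preim[F]$ is closed upstairs and, by the previous paragraph, equal to $\bigcap_k V_k$ for some open $V_k$. Because $\pi$ is closed (Lemma~\ref{lemma:piclosed}), the set
$$
W_k=(B\cup\{\infty\})\setminus\pi\bigl[((N\times\N)\cup\{\infty\})\setminus V_k\bigr]
$$
is open in $B\cup\{\infty\}$; it contains $F$ since $\pi\preim[F]\subseteq V_k$, and it satisfies $\pi\preim[W_k]\subseteq V_k$. Intersecting and applying $\pi\preim$ then gives $\pi\preim\bigl[\bigcap_k W_k\bigr]\subseteq\bigcap_k V_k=\pi\preim[F]$, and surjectivity of $\pi$ yields $\bigcap_k W_k=F$.

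The arguments are all routine; the only point that requires a little care is the bookkeeping in the construction of the $W_k$ when $\infty\in E$, to make sure that the tails $U_k$ absorb the sheets $N\times\{m\}$ with $m\ge k$ while the $V_{n,k}$ pin down the remaining finitely many sheets. The rest is a packaging exercise using the fact that a closed continuous surjection transports perfectness from the domain to the codomain.
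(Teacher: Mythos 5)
Your proof is correct, but it takes the De Morgan--dual route to the one in the paper, and the two are worth contrasting. The paper stays with open sets and $F_\sigma$: it notes that $\pi\preim[O]$ is open upstairs, writes it as a countable union of closed sets $F_k$ (its trace on each clopen sheet is $F_\sigma$ by Lemma~\ref{lemma:Nperfect}, and $\{\infty\}$ is thrown in if needed), and then simply observes that $O=\pi\bigl[\pi\preim[O]\bigr]=\bigcup_k\pi[F_k]$ is a countable union of closed sets because $\pi$ is a closed map. Pushing an $F_\sigma$ decomposition forward along a closed surjection is a one-line step, so no further work is needed. You instead prove that closed sets are $G_\delta$, which forces you to do two genuinely harder things: upstairs you must interleave the tail neighbourhoods $U_k$ with sheetwise open sets $V_{n,k}$ to get $\bigcap_kW_k=E$ (your bookkeeping here is right), and downstairs you must invoke the standard ``small image'' construction $W_k=(B\cup\{\infty\})\setminus\pi\bigl[((N\times\N)\cup\{\infty\})\setminus V_k\bigr]$ to transport the $G_\delta$ property along the closed map, since inverse images do not preserve $G_\delta$ decompositions of saturated sets in the naive forward direction. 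Both arguments rest on the same two inputs (Lemma~\ref{lemma:Nperfect} and the closedness of $\pi$ from Lemma~\ref{lemma:piclosed}); the paper's $F_\sigma$ formulation just makes the quotient step trivial, while yours makes explicit the general fact that a closed continuous surjection transports perfectness, which is a reusable principle. Your proof is complete as it stands.
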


\begin{proof}
Let $O$ be open in $B\cup\{\infty\}$, then $\pi\preim[O]$ is open
in $(N\times\N)\cup\{\infty\}$ and hence an $F_\sigma$-set: 
By Lemma~\ref{lemma:Nperfect} its intersection with every sheet 
$N\times\{n\}$ is an $F_\sigma$-set, 
and, if $\infty\in\pi\preim[O]$ then $\{\infty\}$~is a contributing
closed set as well.

Say $\pi\preim[O]=\bigcup_{k\in\N}F_k$, 
then $O=\bigcup_{k\in\N}\pi[F_k]$ is an $F_\sigma$-set by 
Lemma~\ref{lemma:piclosed}. 
\end{proof}

\begin{remark}
This example is not quite the one that we want because although $\{\infty\}$~is
a $G_\delta$-set it is not a zero-set.
For assume $f:B\cup\{\infty\}\to[0,1]$ is continuous and such 
that $f(\infty)=0$ and $f\bigl(\pi(x,0,0)\bigr)>0$ for all~$x\in\R$.

Then by the method proof of Lemma~\ref{lemma.QtoP} we find, natural 
numbers $k$ and~$m$, an open interval~$J$ and a dense subset~$A$ of~$J$
such that for all $a\in A$ we have $f\bigl(\pi(a,0,0)\bigr)\ge2^{-k}$ 
and $f\bigl(\pi(x,y,0)\bigr)>\frac12f\bigl(\pi(a,0,0)\bigr)$ for 
all $\orpr xy\in U'(a,m)$.
As before this implies that $f\bigl(\pi(x,0,0)\bigr)\ge2^{-k-1}$ 
for all~$x\in J$ and that there is a single $G_\delta$-set~$G$ that is dense
in~$J$ and such that $f\bigl(\pi(x,0,n)\bigr)\ge2^{-k-1}$ for all~$x\in G$ and
all~$n\in\N$.
This would contradict the continuity of~$f$ at~$\infty$.
\end{remark}

\section{A whole Library}
\label{sec.library}

We redo our construction.

We start with the product $N\times\N\times\N$ and add a point~$\infty$.
The basic neighbourhoods of $\infty$ are the sets 
$$
V_k=\bigcup_{m\ge k}\bigcup_{n\ge k} N\times\{\orpr mn\}
$$
We apply the Jones construction to every column in this matrix, so that
we get a book~$B_m$ for every~$m$.

The resulting quotient space, which we call~$L$ is the union of these books
and the point~$\infty$ (yes, $L$~for Library).
We let~$\pi$ denote the quotient map.

\begin{proposition}
The space $L$ is regular, but not completely regular.  
\end{proposition}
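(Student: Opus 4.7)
My plan is (a) to prove regularity by producing a local base of closed neighbourhoods at every point of~$L$, and (b) to prove the failure of complete regularity by a direct adaptation of the remark following Proposition~\ref{prop:Bperfect}. The key observation driving (a) is that each basic neighbourhood~$V_k$ of~$\infty$ is in fact already closed in~$L$. Indeed, a point $x\notin V_k$ lies either in a book~$B_m$ with $m<k$, in which case the whole open book~$B_m$ is a neighbourhood of~$x$ disjoint from~$V_k$, or in a book~$B_m$ with $m\geq k$ at a sheet of index~$n<k$. In the latter case a basic Niemytzki neighbourhood of~$x$ in that sheet consists of a disc together with at most the single boundary point~$\orpr{x_0}0$; the only cross-sheet identifications pair boundary points of adjacent sheets, and any shared point landing in a sheet of index~$\geq k$ would already belong to~$V_k$, so such a neighbourhood of~$x$ is disjoint from~$V_k$. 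Hence $\{V_k:k\in\N\}$ is a closed local base at~$\infty$. At any other point of~$L$, regularity is inherited from the regularity of the containing book~$B_m$ established in Section~\ref{sec.book}: $B_m$ is open in~$L$, and any sufficiently small closed neighbourhood of a point of~$B_m$ meets only finitely many sheets, hence avoids some~$V_k$ and remains closed in~$L$.

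For~(b) I would take $x=\infty$ and let $C=\{\pi(\orpr y0,m,0):y\in\R,\,m\in\N\}$ be the union of the bottom $x$-axes of all the books. This set is closed: $\pi\preim[C]$ is contained in the closed subset $\{(\orpr y0,m,0):y\in\R,\,m\in\N\}\cup\{(\orpr y0,m,1):y\in\Q,\,m\in\N\}$ of $N\times\N\times\N$ (using that the $x$-axis of~$N$ has discrete subspace topology and that $\Q\times\{0\}$ is therefore closed in~$N$), and $V_1$ witnesses that $\infty\notin\overline C$. Suppose, towards a contradiction, that $f:L\to[0,1]$ is continuous with $f(\infty)=0$ and $f\equiv 1$ on~$C$, and choose $K$ so large that $f<\tfrac12$ throughout~$V_K$. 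Fix any $m\geq K$: then $f|_{B_m}$ is continuous and identically~$1$ on the bottom $x$-axis of~$B_m$, so running the Baire-category argument of the remark after Proposition~\ref{prop:Bperfect} (with starting value~$1$ in place of~$2^{-k}$) produces a nonempty open interval~$J$ and a single dense $G_\delta$-subset~$G$ of~$J$ such that $f\bigl(\pi(\orpr y0,m,n)\bigr)\geq\tfrac12$ for every $y\in G$ and every $n\in\N$. Taking $n=K$ gives points of the sheet $(m,K)\subseteq V_K$ at which $f\geq\tfrac12$, contradicting the choice of~$K$.

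The main technical point is aligning the two indices so that the contradiction sits on an explicit sheet. Propagation inside~$B_m$ produces the uniform lower bound~$\tfrac12$ on \emph{every} one of its sheets, and the fact that $V_K$ reaches down into each book~$B_m$ with $m\geq K$ as far as the sheet of index~$K$ then puts this lower bound in direct conflict with the tail bound~$f<\tfrac12$ on~$V_K$. No individual step is difficult; what requires attention is the choice of~$C$ and the way the double condition $m\geq k$, $n\geq k$ in the definition of~$V_k$ converts the $\infty$-level obstruction from the one-book setting into a sheet-level one here.
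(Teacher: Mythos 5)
Your argument is correct and follows essentially the paper's own route: the same closed set (the union of the bottom $x$-axes of all the books), the same propagation of a bound through every sheet of a book via Lemmas~\ref{lemma.PtoQ} and~\ref{lemma.QtoP} --- you use it in the ``lower bound'' form of the remark following Proposition~\ref{prop:Bperfect} and land the contradiction on the sheet of index~$K$ of a single book $B_m$ with $m\ge K$, where the paper uses the ``$f=0$'' form of Remark~\ref{rem:Gdelta} on all books at once and lets the contradiction appear at~$\infty$ --- together with the same observation that the sets~$V_k$ are closed neighbourhoods of~$\infty$. The only slip is an off-by-one: $V_1$ does not witness $\infty\notin\overline{C}$, because the rational points of each bottom axis are identified with points of the sheets of index~$1$, and those sheets lie in~$V_1$; take~$V_2$ instead.
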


\begin{proof}
The space~$L$ is completely regular at all points except possibly~$\infty$.

The space is regular at~$\infty$ by the same argument as for~$B$:
for every~$k$ the closed neighbourhood~$V_{k+1}$ is contained in the interior
of~$V_k$.

To see that $L$ is not completely regular at~$\infty$ let $f:L\to[0,1]$
be continuous and such that $f(x,0,m,0)=0$ for all $x\in\R$ and all $m\in\N$.
By Remark~\ref{rem:Gdelta} applied to every column we obtain a single
dense $G_\delta$-set~$G$ in~$\R$ such that for all $x\in G$
and all $m,n\in\N$ we have $f(\pi(x,0,m,n))=0$.
By continuity this implies that $f(\infty)=0$.  
\end{proof}

\begin{proposition}
Every one-point set in~$L$ is a zero-set.  
\end{proposition}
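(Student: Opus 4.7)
The plan is to exhibit, for each $z\in L$, a continuous $f\colon L\to[0,1]$ vanishing exactly at~$z$, by splitting into cases.

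The two easy cases are disposed of as follows. For $z=\infty$ I set $f(\infty)=0$ and $f\bigl(\pi(x,y,m,n)\bigr)=2^{-m}$; since every identification happens within a single column this descends through~$\pi$, the function is locally constant on $L\setminus\{\infty\}$, bounded by~$2^{-k}$ on~$V_k$, and strictly positive off~$\infty$. For $z=\pi(x_0,y_0,m_0,n_0)$ with $y_0>0$ I pick $r<y_0$ so the Euclidean ball of radius~$r$ about $(x_0,y_0)$ misses the $x$-axis, and set $F(x,y,m_0,n_0)=\min\bigl(1,\norm{\orpr xy-\orpr{x_0}{y_0}}/r\bigr)$ with $F\equiv1$ on every other sheet and at~$\infty$. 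Since $F\equiv1$ on every $x$-axis, all identifications are respected, and continuity at~$\infty$ is automatic because the offending sheet sits outside~$V_k$ once $k>\max(m_0,n_0)$.

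The main case is $z=\pi(x_0,0,m_0,n_0)$ on the $x$-axis. Here I would first construct an auxiliary continuous $g\colon N\to[0,1]$ with $g(x_0,0)=0$, $g(x,0)=1$ for $x\ne x_0$, and $g>0$ on the open upper half-plane. An explicit candidate is
\[
g(x,y)=\min\bigl(1,\norm{\orpr xy-\orpr{x_0}0}^2/y\bigr)\quad(y>0),
\]
extended by the prescribed boundary values; continuity at~$(x_0,0)$ follows from the tangent-disc estimate $\norm{\orpr xy-\orpr{x_0}0}^2/y\le 4\cdot 2^{-k}$ on~$U'(x_0,k)$, while at every other axis point the ratio blows up on shrinking tangent discs, forcing $g\to 1$. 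Next I would choose scalars $c_n\in[0,1]$ vanishing exactly on the set of indices $n$ with $\pi(x_0,0,m_0,n)=z$ and strictly positive elsewhere, subject to the pairing induced by identifications at $(x_0,0)$: $c_{2k}=c_{2k+1}$ when $x_0\in\Q$, and $c_{2k-1}=c_{2k}$ with $c_0$ free when $x_0\in\Irr$. The preimage of~$z$ corresponds to exactly one such pair (or to the singleton $\{0\}$ when $x_0\in\Irr$ and $n_0=0$), so the choice is consistent. Setting
\[
F(x,y,m,n)=\begin{cases}g(x,y)+c_n\bigl(1-g(x,y)\bigr)&\text{if }m=m_0,\\1&\text{if }m\ne m_0,\end{cases}
\]
and $F(\infty)=1$, the value at $(x,0,m_0,n)$ equals~$1$ whenever $x\ne x_0$ and equals~$c_n$ when $x=x_0$, so every identification is respected. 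The resulting $f\colon L\to[0,1]$ is continuous and vanishes precisely on~$\{z\}$.

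The principal obstacle is producing the auxiliary~$g$: a continuous function on the Niemytzki plane equal to~$0$ at one axis point yet to~$1$ at every other axis point, even though these cluster Euclidean-ly near~$(x_0,0)$. It is the discreteness of the $x$-axis as a subspace of~$N$, together with the specific tangent-disc geometry, that lets the formula above behave as required; all remaining verifications (identification compatibility, continuity at~$\infty$, and the vanishing locus of~$f$) are routine.
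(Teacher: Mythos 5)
Your proof is correct, and for the one point that actually requires an argument --- $\infty$ --- you use exactly the paper's function ($2^{-m}$ on book~$B_m$, $0$ at~$\infty$). Where you diverge is in the treatment of the remaining points: the paper simply says ``we only need to check this for $\{\infty\}$'', implicitly leaning on the facts that $L$ is completely regular at every point other than~$\infty$ and that singletons are $G_\delta$-sets (so that the standard ``sum of $2^{-n}f_n$'' trick applies), whereas you build explicit zero-set witnesses in every case. The substantive extra content is your auxiliary $g(x,y)=\min\bigl(1,\norm{\orpr xy-\orpr{x_0}0}^2/y\bigr)$ on the Niemytzki plane: the tangent-disc inequality $(x-x_0)^2+y^2<2^{1-k}y$ on $U'(x_0,k)$ does give $g\to0$ at $\orpr{x_0}0$ and $g\to1$ at every other axis point, and the fact that $g\equiv1$ on the rest of the $x$-axis is precisely what makes it compatible with \emph{all} the rational and irrational identifications simultaneously --- your bookkeeping with the paired constants $c_{2k}=c_{2k+1}$ (rational case) versus $c_{2k-1}=c_{2k}$ with $c_0$ free (irrational case) matches the gluing pattern of the book. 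What your route buys is a self-contained proof that does not presuppose the complete-regularity claim from the preceding proposition or the $G_\delta$ statement from the final theorem; what it costs is length, since the paper's reduction disposes of all points other than~$\infty$ in one sentence. Both arguments are sound.
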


\begin{proof}
We only need to check this for~$\{\infty\}$.

But this is straightforward: let $f:L\to[0,1]$ take on the value~$2^{-m}$
on book~$B_m$, and let $f(\infty)=0$.  
\end{proof}

To summarize in a theorem.

\begin{theorem}
The space~$L$ is regular, but not completely regular.
Every one-point set is a zero-set of~$L$, and every closed set is 
a $G_\delta$-set.  
\end{theorem}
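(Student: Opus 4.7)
The first and third sentences of the theorem just restate the two propositions immediately preceding it, so my plan concentrates on the only new content: that every closed subset of~$L$ is a $G_\delta$-set, equivalently, that every open subset of~$L$ is an $F_\sigma$-set. I would mirror the proof of Proposition~\ref{prop:Bperfect} essentially verbatim, after first upgrading Lemma~\ref{lemma:piclosed} to the doubly-indexed setting.

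The first step is to check that $\pi:(N\times\N\times\N)\cup\{\infty\}\to L$ is a closed map. For a closed set~$F$ in the domain, because the Jones identifications are performed inside each column~$B_m$ separately, the analysis of Lemma~\ref{lemma:piclosed} carries over column by column: the saturation $\pi\preim[\pi[F]]$ meets each sheet $N\times\{\orpr mn\}$ in the trace $F\cap(N\times\{\orpr mn\})$ together with at most two identification copies pulled in from $N\times\{\orpr m{n-1}\}$ and $N\times\{\orpr m{n+1}\}$, and the column index~$m$ is never altered by~$\pi$. The only place where the argument needs a genuine adjustment is at~$\infty$: if $F\cap V_k=\emptyset$, then $F$ can still meet infinitely many sheets, since the complement of~$V_k$ is an L-shaped region rather than a finite initial segment. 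A short check shows that the saturation, which spreads at most one level up or down inside each column, remains in the complement of~$V_{k+1}$, so $\infty\notin\overline{\pi\preim[\pi[F]]}$.

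Once closedness of~$\pi$ is in hand, the rest is a direct transcription of Proposition~\ref{prop:Bperfect}. For open $O\subseteq L$ the preimage $\pi\preim[O]$ is open in $(N\times\N\times\N)\cup\{\infty\}$; Lemma~\ref{lemma:Nperfect} makes its intersection with every sheet an $F_\sigma$-set; and countability of the index set $\N\times\N$ (together with the extra closed piece $\{\infty\}$ if needed) lets us write $\pi\preim[O]=\bigcup_{k}F_k$ with each $F_k$ closed. Applying~$\pi$ and invoking its closedness gives $O=\bigcup_k\pi[F_k]$ as an $F_\sigma$-set in~$L$.

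The one spot that needs real care is the $\infty$-bookkeeping in the closedness step: moving from the single axis of sheets in the book~$B$ to the $\N\times\N$-grid of the library~$L$ permits a closed set avoiding~$\infty$ to live on infinitely many sheets, so the phrase ``$F$ meets only finitely many sheets'' from the proof of Lemma~\ref{lemma:piclosed} has to be replaced by the mild L-shape argument indicated above. Beyond that small shift from $V_k$ to $V_{k+1}$, no new idea is required.
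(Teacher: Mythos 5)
Your proposal is correct and follows exactly the route the paper takes: its proof of this theorem simply says to adapt Lemma~\ref{lemma:piclosed} and Proposition~\ref{prop:Bperfect} to the library, which is precisely what you carry out. Your observation that a closed set missing $V_k$ may now meet infinitely many sheets, but that its saturation (spreading only one level in the $n$-direction within each column) still avoids $V_{k+1}$, correctly supplies the one detail the paper leaves implicit.
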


\begin{proof}
The only thing that still needs to be verified is the last statement.

But that is done almost verbatim as for~$B\cup\{\infty\}$.   
Adapt the proofs of Lemma~\ref{lemma:piclosed} and 
Proposition~\ref{prop:Bperfect} to the present situation.
\end{proof}

\begin{remark}
This example also has an application in lattice theory.
The lattice~$\Open(X)$ of open sets in a topological space is readily 
seen to be complete: the union and the interior of the intersection serve 
as the supremum and infimum, respectively, of a family of open sets.
An important sublattice of~$\Open(X)$ is that of the cozero-sets: $\Coz(X)$.

It turns out that $\Open(X)$ is the Dedekind-MacNeille completion of~$\Coz(X)$
if and only if $X$~is completely regular and each singleton set is a zero-set.

As shown by the present example this result is sharp in that complete
regularity cannot be replaced by regularity.

Thanks to Guram Bezhanishvili for supplying this information.
\end{remark}

\begin{bibdiv}
\begin{biblist}

\bib{MR0345087}{book}{
   author={Alexandroff, P.},
   author={Hopf, H.},
   title={Topologie. I},
   series={Die Grundlehren der mathematischen Wissenschaften},
   volume={Band 45},
   note={Berichtigter Reprint},
   publisher={Springer-Verlag, Berlin-New York},
   date={1974},
   pages={xiii+636+2},
   review={\MR{0345087}},
}

\bib{MR1039321}{book}{
   author={Engelking, Ryszard},
   title={General topology},
   series={Sigma Series in Pure Mathematics},
   volume={6},
   edition={2},
   note={Translated from the Polish by the author},
   publisher={Heldermann Verlag, Berlin},
   date={1989},
   pages={viii+529},
   isbn={3-88538-006-4},
   review={\MR{1039321}},
}

\bib{MR0413044}{article}{
   author={Jones, F. Burton},
   title={Hereditarily separable, non-completely regular spaces},
   book={
      title={Topology Conference. Blacksburg, VA, USA, 1973},
      editor={Dickman Jr,, Raymond F.},
      editor={Fletcher, Peter},
      series={Lecture Notes in Math.},
      volume={375},
      publisher={Springer, Berlin-New York},
   },
   date={1974},
   pages={149--152},
   review={\MR{0413044}},
}

\bib{MR0601748}{article}{
   author={Mysior, A.},
   title={A regular space which is not completely regular},
   journal={Proc. Amer. Math. Soc.},
   volume={81},
   date={1981},
   number={4},
   pages={652--653},
   issn={0002-9939},
   review={\MR{0601748}},
   doi={10.2307/2044178},
}

\bib{MR507446}{book}{
   author={Steen, Lynn Arthur},
   author={Seebach, J. Arthur, Jr.},
   title={Counterexamples in topology},
   edition={2},
   publisher={Springer-Verlag, New York-Heidelberg},
   date={1978},
   pages={xi+244},
   isbn={0-387-90312-7},
   review={\MR{507446}},
}

\bib{MR1512595}{article}{
   author={Tychonoff, A.},
   title={\"Uber die topologische Erweiterung von R\"aumen},
   language={German},
   journal={Math. Ann.},
   volume={102},
   date={1930},
   number={1},
   pages={544--561},
   issn={0025-5831},
   review={\MR{1512595}},
   doi={10.1007/BF01782364},
}
\end{biblist}
\end{bibdiv}

\end{document}